 \newtheorem{proposition}{Proposition}
\newtheorem{corollary}{Corollary}
\title{The epimorphisms of the category Haus are exactly the image-dense morphisms}
\author{J\'er\^ome~Lapuyade-Lahorgue}
\date{}
\begin{document}
%\ninept
%
\maketitle
\section{Introduction}
In this document, we define the category \textbf{Haus} as the category whose objects are Hausdorff topological spaces and morphisms are the continous maps. The category \textbf{Top} is the category whose objects are the topological spaces and the morphisms are the continous maps. The definition of epimorphisms and the details about category theory can be found in \cite{adamek}.
\section{Image-dense morphisms of Haus are epimorphisms}
\begin{proposition}
Let $A$ and $B$ be two Hausdorff topological spaces and let $A\xrightarrow{f} B$ be a continuous map. If the closure $\overline{f(A)}$ of $f(A)$ is such $B=\overline{f(A)}$, then $f$ is an epimorphism.
\end{proposition}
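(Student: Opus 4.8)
The plan is to apply the definition of epimorphism directly. I would fix an arbitrary Hausdorff space $C$ together with two continuous maps $g,h\colon B\to C$ satisfying $g\circ f = h\circ f$, and then aim to conclude that $g=h$. The first, easy step is to record that $g$ and $h$ agree on the image: for every $a\in A$ we have $g(f(a))=h(f(a))$, so if I write $E=\{b\in B : g(b)=h(b)\}$ for the agreement set (the equalizer of $g$ and $h$), then $f(A)\subseteq E$.

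The heart of the argument is to show that $E$ is \emph{closed} in $B$, and this is exactly where the Hausdorff hypothesis on $C$ is used. I would introduce the pairing map $\langle g,h\rangle\colon B\to C\times C$ defined by $b\mapsto (g(b),h(b))$, which is continuous because its two components are. Since $C$ is Hausdorff, the diagonal $\Delta_C=\{(c,c):c\in C\}$ is closed in $C\times C$; consequently $E=\langle g,h\rangle^{-1}(\Delta_C)$ is closed in $B$, being the preimage of a closed set under a continuous map.

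Once $E$ is known to be closed, the conclusion follows immediately from the density hypothesis: $E$ is a closed set containing $f(A)$, hence it contains the closure $\overline{f(A)}$, which by assumption equals $B$. Therefore $E=B$, i.e.\ $g(b)=h(b)$ for all $b\in B$, so $g=h$. Since $C$, $g$, and $h$ were arbitrary, $f$ is an epimorphism of \textbf{Haus}.

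I expect the closedness of the agreement set $E$ to be the only genuine obstacle; the reduction to it and the density step are routine. The delicate point worth stating explicitly is the equivalence ``$C$ Hausdorff $\iff\Delta_C$ closed in $C\times C$,'' since this is precisely the structural feature of \textbf{Haus} (as opposed to \textbf{Top}) that makes image-dense maps epic, and it is what one should expect to differ when the analogous statement is examined in a non-separated setting.
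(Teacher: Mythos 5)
Your proof is correct, and it takes a genuinely different (though closely related) route from the paper's. The paper argues the contrapositive: assuming $g\neq h$, it picks $b$ with $g(b)\neq h(b)$, separates $g(b)$ and $h(b)$ by disjoint open sets $\mathcal{O}_{1},\mathcal{O}_{2}$ in $C$, and then uses density of $f(A)$ to produce $a\in A$ with $f(a)\in g^{-1}(\mathcal{O}_{1})\cap h^{-1}(\mathcal{O}_{2})$; disjointness forces $g(f(a))\neq h(f(a))$, so $g\circ f\neq h\circ f$. You instead argue directly, packaging the separation step into the standard lemma that the equalizer $E=\{b\in B : g(b)=h(b)\}$ is closed, obtained as the preimage of the closed diagonal $\Delta_{C}$ under the continuous pairing $\langle g,h\rangle$; density then gives $E\supseteq\overline{f(A)}=B$. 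The two arguments are essentially dual presentations of one fact: the paper's open set $g^{-1}(\mathcal{O}_{1})\cap h^{-1}(\mathcal{O}_{2})$ is exactly a neighborhood of $b$ contained in the complement of $E$, i.e.\ an inline proof that $E$ is closed. What your version buys is modularity: it isolates the characterization ``$C$ Hausdorff $\iff$ $\Delta_{C}$ closed in $C\times C$'' as the structural ingredient, which makes transparent why the result is specific to \textbf{Haus} and yields for free the more general statement that two continuous maps into a Hausdorff space agreeing on a dense subset are equal. What the paper's version buys is self-containment: it uses only the raw definition of Hausdorff, needs no product topology, and exhibits an explicit witness $a\in A$ at which the two compositions differ.
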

\begin{proof}
Let $B\xrightarrow{g} C$ and $B\xrightarrow{h} C$ two continuous maps between two Hausdorff topological spaces such $g\neq h$. Then there exists $b\in B$ such $g(b)\neq h(b)$. As $C$ is Hausdorff, then there exists two disjoint open subsets $\mathcal{O}_{1}$ and $\mathcal{O}_{2}$ of $C$ such $g(b)\in \mathcal{O}_{1}$ and $h(b)\in \mathcal{O}_{2}$. We deduce that $b\in g^{-1}(\mathcal{O}_{1})\cap h^{-1}(\mathcal{O}_{2})$. As $B=\overline{f(A)}$, there exists $a\in A$ such $f(a)\in g^{-1}(\mathcal{O}_{1})\cap h^{-1}(\mathcal{O}_{2})$. And as $\mathcal{O}_{1}$ and $\mathcal{O}_{2}$ are disjoint, $g\circ f(a)\neq h\circ f(a)$. Consequently, $f$ is an epimorphism.
\end{proof}
\section{The category \textbf{Haus} is a reflective and full sub-category of \textbf{Top}}
The fullness of \textbf{Haus} in \textbf{Top} is trivial. We have to prove the reflectivity.
\begin{proposition}
Let $C$ be a topological space and let $\sim$ be the equivalence relation defined by:
\begin{equation}
x\sim y\Leftrightarrow \textrm{ for any continous map } C\xrightarrow{g}D\textrm{ where } D\textrm{ Hausdorff, }g(x)=g(y),\nonumber
\end{equation}
then the projection:
\begin{equation}
C\xrightarrow{r}C/\sim,\nonumber
\end{equation}
where $C/\sim$ is provided with the quotient-topology, is a reflection from the \textbf{Top}-object $C$ to the \textbf{Haus}-object $C/\sim$.
\end{proposition}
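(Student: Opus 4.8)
The plan is to verify that $r\colon C\to C/\!\sim$ satisfies the universal property of a reflection into \textbf{Haus}. Since \textbf{Haus} is a full subcategory of \textbf{Top}, this amounts to showing two things: first, that $C/\!\sim$ is genuinely Hausdorff, so that it is an object of \textbf{Haus}; and second, that every continuous map $C\xrightarrow{g}D$ with $D$ Hausdorff factors uniquely through $r$ as $g=\overline{g}\circ r$ for a continuous $\overline{g}\colon C/\!\sim\to D$. I would establish the factorization first, because the Hausdorffness of the quotient turns out to be a consequence of it rather than an independent verification.

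For the factorization, fix a continuous $g\colon C\to D$ with $D$ Hausdorff. By the very definition of $\sim$, whenever $x\sim y$ we have $g(x)=g(y)$, so $g$ is constant on each equivalence class and descends to a well-defined set map $\overline{g}\colon C/\!\sim\to D$ with $\overline{g}\circ r=g$. Because $r$ is surjective, $\overline{g}$ is the only set map with this property, which gives uniqueness. Continuity of $\overline{g}$ is then immediate from the universal property of the quotient topology: for any open $U\subseteq D$ we have $r^{-1}(\overline{g}^{-1}(U))=g^{-1}(U)$, which is open in $C$, so $\overline{g}^{-1}(U)$ is open in $C/\!\sim$ by definition of the quotient topology. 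Any continuous factorization of $g$ through $r$ must agree with $\overline{g}$ as a function, so uniqueness holds among continuous maps as well.

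The main obstacle, and the step that actually uses the careful design of $\sim$, is showing that $C/\!\sim$ is Hausdorff; here the factorization does the work. Take two distinct points $[x]\neq[y]$ of $C/\!\sim$. By definition of $\sim$, distinctness means there exist a Hausdorff space $D$ and a continuous map $g\colon C\to D$ with $g(x)\neq g(y)$. Applying the factorization yields $\overline{g}([x])=g(x)\neq g(y)=\overline{g}([y])$. Since $D$ is Hausdorff, choose disjoint open sets $U,V\subseteq D$ with $g(x)\in U$ and $g(y)\in V$; then $\overline{g}^{-1}(U)$ and $\overline{g}^{-1}(V)$ are open in $C/\!\sim$ (as $\overline{g}$ is continuous), contain $[x]$ and $[y]$ respectively, and are disjoint because $U\cap V=\emptyset$. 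Hence $C/\!\sim$ is Hausdorff.

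Together these two facts show that $C/\!\sim$ lies in \textbf{Haus} and that $r$ enjoys the required universal factorization property, so $r$ is the desired reflection. Along the way I would also record the routine points that $\sim$ is indeed an equivalence relation (reflexivity, symmetry, and transitivity being immediate from the definition) and that $r$ is continuous by construction of the quotient topology. The one genuinely delicate observation is that the separation witnessing Hausdorffness in $C/\!\sim$ is not intrinsic to $C$ but is imported back from a Hausdorff target $D$ that already separates $x$ from $y$.
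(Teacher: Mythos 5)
Your proposal is correct and follows essentially the same route as the paper's own proof: factor $g$ through $r$ using the definition of $\sim$, get continuity of $\overline{g}$ from the quotient topology, and then establish Hausdorffness of $C/\!\sim$ by pulling back disjoint open sets of $D$ along the continuous map $\overline{g}$. Your explicit remark that the Hausdorffness argument depends on the prior factorization step is a nice clarification of the logical order that the paper uses implicitly.
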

\begin{proof}
Let $C\xrightarrow{f}D$ a continuous map to a Hausdorff space $D$. If $r(x)=r(y)$, then, by definition of the equivalence relation $\sim$, $f(x)=f(y)$. Consequently, any $C\xrightarrow{f}D$ is compatible with $\sim$ and by consequence, there exists a unique function (ie. \textbf{Set}-morphism) $\overline{f}$ such the following diagram:
$$\xymatrix{
C\ar[r]^{r}\ar[d]^{f}&C/\sim\ar[ld]^{\overline{f}}\\
D
}$$
commutes. We have to show that $\overline{f}$ is a continuous map and that $C/\sim$ is a Hausdorff space.\\
$\overline{f}$ is a continuous map:\\
We recall that $\mathcal{O}$ is an open subset of $C/\sim$ is and only if $r^{-1}(\mathcal{O})$ is an open subset of $C$ (see \cite{godbillon}, Theorem 4.1 of the chapitre I). Let $U$ an open subset of $D$, then $r^{-1}(\overline{f}^{-1}(U))=f^{-1}(U)$ is an open set of $C$. We deduce that $\overline{f}^{-1}(U)$ is an open set of $C/\sim$ and by consequence, $\overline{f}$ is continous.\\
$C/\sim$ is a Hausdorff space:\\
Let $r(x)\neq r(y)$, then there exists $C\xrightarrow{f}D$ continous map to $D$ an Hausdorff space such $f(x)\neq f(y)$. $D$ is Hausdorff, then there exists two disjoint open-sets $\mathcal{O}_{x}$ and  $\mathcal{O}_{y}$ of $D$ such $f(x)\in \mathcal{O}_{x}$ and $f(y)\in \mathcal{O}_{y}$. It implies that $r(x)\in \overline{f}^{-1}(\mathcal{O}_{x})$ and $r(y)\in \overline{f}^{-1}(\mathcal{O}_{y})$. The subsets $\overline{f}^{-1}(\mathcal{O}_{x})$ and $\overline{f}^{-1}(\mathcal{O}_{y})$ are clearly disjoint open subsets of $C/\sim$.
\end{proof}
We will denote $H(C)$ the set  $C/\sim$ and it is called the ``Hausdorff quotient'' of $C$.
\begin{corollary}
A topological space $C$ is Hausdorff if and only if $C$ and $H(C)$ are homeomorphic.
\end{corollary}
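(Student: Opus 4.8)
The plan is to prove the two implications separately, using the reflection map $r\colon C\to H(C)$ from the previous proposition as the candidate homeomorphism in the nontrivial direction. The whole argument rests on the observation that, when $C$ is already Hausdorff, the equivalence relation $\sim$ collapses to equality, so that $r$ becomes a continuous bijection that one then checks to be open.

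The reverse implication is immediate. By the previous proposition, $H(C)=C/\sim$ is always a Hausdorff space. If $C$ and $H(C)$ are homeomorphic, then $C$ is homeomorphic to a Hausdorff space; since the separation axiom is invariant under homeomorphism (it is a purely topological property, transported along the homeomorphism together with its inverse), $C$ is Hausdorff.

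For the forward implication, assume $C$ is Hausdorff and consider the projection $r\colon C\to C/\sim$. First I would show that under this hypothesis $\sim$ is the equality relation: if $x\neq y$, then the identity map $\mathrm{id}_{C}\colon C\to C$ is a continuous map into the Hausdorff space $C$ with $\mathrm{id}_{C}(x)=x\neq y=\mathrm{id}_{C}(y)$, so by definition of $\sim$ we have $x\not\sim y$. Hence $\sim$ identifies only equal points and $r$ is a bijection. Next, $r$ is continuous, being the canonical projection onto a quotient. Finally I would check that $r$ is open: for any open $U\subseteq C$, because $\sim$ is trivial the saturation $r^{-1}(r(U))$ equals $U$, which is open, so by the defining property of the quotient topology recalled in the previous proof $r(U)$ is open in $C/\sim$. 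A continuous open bijection is a homeomorphism, which concludes this direction.

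The only delicate point is this last step, where one must confirm that the quotient topology on $C/\sim$ coincides with the topology of $C$ transported by the bijection $r$; it reduces to the remark that saturation by a trivial equivalence relation is the identity, so no genuine obstacle remains. One could alternatively phrase the forward direction through the universal property: since $C$ is already a \textbf{Haus}-object, the reflection $r$ must be an isomorphism in \textbf{Haus}, hence a homeomorphism. I expect the direct verification above to be the cleaner route.
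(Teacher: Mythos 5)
Your proof is correct, but it takes a genuinely different route from the paper. The paper disposes of the corollary in one line by citing a general categorical fact (Proposition I.4.20 of Ad\'amek--Herrlich--Strecker) about full reflective subcategories: for such a subcategory, an object of the ambient category lies in the subcategory (up to isomorphism) exactly when its reflection arrow is an isomorphism. You instead give a self-contained point-set verification: for the reverse direction you use that $H(C)$ is Hausdorff (from the previous proposition) and that the Hausdorff property is a topological invariant; for the forward direction you observe that the identity map $\mathrm{id}_C\colon C\to C$ witnesses that $\sim$ is equality when $C$ is Hausdorff, so $r$ is a continuous bijection, and then you check openness of $r$ via the saturation argument $r^{-1}(r(U))=U$ and the defining property of the quotient topology. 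All of these steps are sound. What the paper's approach buys is brevity and generality --- the same citation works for any full reflective subcategory, with no topology in sight; what your approach buys is that the corollary becomes verifiable without consulting the reference, and it exhibits the homeomorphism concretely as the reflection map $r$ itself rather than as an abstract isomorphism. Your closing remark about deducing the forward direction from the universal property is essentially the paper's argument, so you have in effect found both proofs; note only that making that remark rigorous requires the standard uniqueness-of-reflections argument (or the cited proposition), not just the existence of the factorization.
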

\begin{proof}
As \textbf{Haus} is a full subcategory of \textbf{Top}, one can use the proposition I.4.20 of \cite{adamek}.
\end{proof}
\section{The epimorphisms of \textbf{Haus} are image-dense morphims}
\begin{proposition}
Let $A\xrightarrow{f}B$ an epimorphism of \textbf{Haus}, then $B=\overline{f(A)}$.
\end{proposition}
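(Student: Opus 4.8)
The plan is to argue by contraposition: I will assume that $K := \overline{f(A)}$ is a proper closed subset of $B$ and produce two distinct \textbf{Haus}-morphisms out of $B$ that become equal after precomposition with $f$, contradicting that $f$ is an epimorphism. Concretely, I would build the \emph{double} of $B$ along $K$, namely the quotient
\[
D = (B\times\{0,1\})/\!\approx,\qquad (x,0)\approx(x,1)\iff x\in K,
\]
equipped with the quotient topology, together with the two canonical maps $i_0,i_1\colon B\to D$ sending $x$ to the class of $(x,0)$ and of $(x,1)$ respectively. Each $i_j$ is continuous, being the composite of the inclusion $B\hookrightarrow B\times\{0,1\}$ with the quotient map. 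By construction $i_0$ and $i_1$ agree exactly on $K$, so in particular they agree on $f(A)\subseteq K$; hence $i_0\circ f = i_1\circ f$. On the other hand, choosing any $b_0\in B\setminus K$ (which exists since $K\neq B$), the representatives $(b_0,0)$ and $(b_0,1)$ are not identified, so $i_0(b_0)\neq i_1(b_0)$ and therefore $i_0\neq i_1$.

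The essential point --- and the step I expect to be the main obstacle --- is to verify that $D$ is a Hausdorff space, for only then are $i_0,i_1$ legitimate morphisms of \textbf{Haus} and the contradiction complete. Here the closedness of $K$ is decisive. Writing $q$ for the quotient map, I would first record that a subset of $D$ is open precisely when its preimages in the two copies of $B$ are open, and that $q$ embeds each copy; in particular the sets $q\bigl((B\setminus K)\times\{0\}\bigr)$ and $q\bigl((B\setminus K)\times\{1\}\bigr)$ are \emph{disjoint open} subsets of $D$, because $B\setminus K$ is open (as $K$ is closed) and the identification touches nothing outside $K$. This is exactly the feature that fails for gluings along a non-closed set, such as the line with two origins, and it is what rescues Hausdorffness here.

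With this in hand the separation axiom follows by cases. Given two distinct points of $D$: if both lie in the image of a single copy, say $i_0(x)\neq i_0(x')$, I separate $x,x'$ by disjoint opens $U,U'$ in $B$ and pass to the ``symmetric'' saturated opens $q\bigl(U\times\{0,1\}\bigr)$ and $q\bigl(U'\times\{0,1\}\bigr)$; if the two points lie in different copies off the gluing locus, say $i_0(x)$ and $i_1(y)$ with $x,y\notin K$, the two disjoint opens recorded above already separate them; and every remaining configuration reduces to one of these once one observes that a point of the shared locus $q(K)$ lies simultaneously in both copies. Assembling the cases shows $D$ is Hausdorff, which closes the contrapositive and yields $B=\overline{f(A)}$. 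One could alternatively realise $D$ as the pushout of $f$ against itself in \textbf{Haus}, using that the reflector $H$ of the previous section preserves pushouts, but the explicit double makes the Hausdorff verification transparent and is the version I would write out.
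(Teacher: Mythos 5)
Your proof is correct, but it takes a genuinely different route from the paper's. You use the classical construction: glue two copies of $B$ along the closed set $K=\overline{f(A)}$ and exhibit the two canonical maps $i_0\neq i_1$ with $i_0\circ f=i_1\circ f$; the whole burden is then the Hausdorffness of the double $D$, and your case analysis is complete and correctly locates where closedness of $K$ enters --- two points in the same copy are separated by the saturated symmetric opens $q\bigl(U\times\{0,1\}\bigr)$ and $q\bigl(U'\times\{0,1\}\bigr)$, two points in different copies away from $q(K)$ are separated by the disjoint opens $q\bigl((B\setminus K)\times\{0\}\bigr)$ and $q\bigl((B\setminus K)\times\{1\}\bigr)$, which are open and saturated exactly because $B\setminus K$ is open, and the mixed case reduces to the first because a point of $q(K)$ lies in both copies. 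The paper instead collapses $\overline{f(A)}$ to a single point via $q\colon B\to B/\overline{f(A)}$, and --- since this quotient need not be Hausdorff (separating a point from a closed set is a regularity-type demand that Hausdorffness of $B$ does not supply) --- composes with the Hausdorff reflection $r$ of Section 3; epimorphy of $f$ forces $r\circ q$ to be constant, and a contradiction is extracted from a nonempty open $\mathcal{O}\subseteq B\setminus\overline{f(A)}$. Comparing the two: your argument is elementary and self-contained, needing none of the reflection machinery, and it produces the two distinguishing \textbf{Haus}-morphisms explicitly; the paper's argument reuses the machinery it has built, but it must then show that the reflection does not collapse $q(\mathcal{O})$ onto the class point $q(\overline{f(A)})$ --- the mirror image of your Hausdorffness obstacle --- and that step is handled rather tersely in the paper, whereas your proof has no comparably delicate point. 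Your closing remark that $D$ is the pushout of $f$ with itself in \textbf{Haus} is also correct (two continuous maps into a Hausdorff space that agree on $f(A)$ automatically agree on its closure, so $D$ has the right universal property), but, as you say, the explicit double is the transparent version to write out.
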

\begin{proof}
Let $B\xrightarrow{q}B/\overline{f(A)}$ be the quotient morphism such $q(x)=q(y)$ if and only if ($x=y$ and $x,y\notin \overline{f(A)}$) or ($x,y\in \overline{f(A)}$) and $B/\overline{f(A)}\xrightarrow{r}H\left(B/\overline{f(A)}\right)$ be the Hausdorff quotient of $B/\overline{f(A)}$.\\
We have $r\circ q\circ f$ is constant and as $f$ is an epimorphism of \textbf{Haus}, then the \textbf{Haus}-morphism $r\circ q$ is constant. So, either $q$ is constant or $r$ is constant. If $q$ is constant, then $B=\overline{f(A)}$.\\
Suppose that $B\neq\overline{f(A)}$ and $r$ is constant. Let $x\in B\backslash\overline{f(A)}$. As $\overline{f(A)}$ is a closed subset of $B$, then there exists an open subset $\mathcal{O}$ of $B$ such $x\in\mathcal{O}\subset B\backslash\overline{f(A)}$. By definition of $q$, for $x_1$ and $x_2$ in $\mathcal{O}$, $q(x_1)=q(x_2)$ if and only if $x_1=x_2$. It implies that $q^{-1}(q(\mathcal{O}))=\mathcal{O}$ and $q(\mathcal{O})$ is a Hausdorff open subset of $B/\overline{f(A)}$. We deduce that $r\circ q(\mathcal{O})$ has at least one point which is different from $r\circ q(\overline{f(A)})$. Consequently, $r$ takes at least two different values; this fact is contradictory with $r$ constant.
\end{proof}

\end{document}